\def\H{{\mathcal H}}
\def\R{{\mathbb R}}
\def\d{\delta}
\def\D{\Delta}
\def\t{\theta}
\def\e{\varepsilon}
\def\f{\varphi}
\def\s{\sigma}
\newcommand{\supp}{\operatorname{supp}}
\newtheorem{theorem}{Theorem}[section]
\newtheorem{lemma}[theorem]{Lemma}
\newtheorem{proposition}[theorem]{Proposition}
\newtheorem{conjecture}[theorem]{Conjecture}
\numberwithin{equation}{section}
\begin{document}
\begin{center}
  \emph{Dedicated to a memory of remarkable mathematician and man Victor Petrovich Havin}
\end{center}

\vspace{.4cm}
\title {On the maximum principle for the Riesz transform}

\author{Vladimir Eiderman and Fedor Nazarov}
\address{Vladimir Eiderman, Department of  Mathematics, Indiana University, Bloomington, IN}
\email{veiderma@indiana.edu}
%\author{}
\address{Fedor Nazarov,  Department of  Mathematics, Kent State University, Kent, OH}
 \email{nazarov@math.kent.edu}
\begin{abstract}
Let $\mu$ be a measure in $\mathbb R^d$ with compact support and continuous density, and let
$$
R^s\mu(x)=\int\frac{y-x}{|y-x|^{s+1}}\,d\mu(y),\ \ x,y\in\mathbb R^d,\ \ 0<s<d.
$$
We consider the following conjecture:
$$
\sup_{x\in\mathbb R^d}|R^s\mu(x)|\le C\sup_{x\in\text{supp}\,\mu}|R^s\mu(x)|,\quad C=C(d,s).
$$
This relation was known for $d-1\le s<d$, and is still an open problem in the general case. We prove the maximum principle for $0< s<1$, and also for $0<s<d$ in the case of radial measure. Moreover, we show that this conjecture is incorrect for non-positive measures.
\end{abstract}
\maketitle

\section{Introduction}\label{introd}

Let $\mu$ be a non-negative finite Borel measure with compact support in $\mathbb R^d$, and let $0<s<d$. The truncated Riesz operator $R_{\mu,\e}^s$ is defined by the equality
$$
R_{\mu,\e}^sf(x)=\int_{|y-x|>\e}\frac{y-x}{|y-x|^{s+1}}f(y)\,d\mu(y),\ \ x,y\in\mathbb R^d,\ f\in L^2(\mu),\ \e>0.
$$
For every $\e>0$ the operator $R_{\mu,\e}^s$ is bounded on $L^2(\mu)$. By $R_{\mu}^s$ we denote a linear operator on $L^2(\mu)$ such that
$$
R_{\mu}^sf(x)=\int\frac{y-x}{|y-x|^{s+1}}f(y)\,d\mu(y),
$$
whenever the integral exists in the sense of the principal value. We say that $R_{\mu}^s$ is bounded on $L^2(\mu)$ if
$$
\|R_{\mu}^s\|:=\sup_{\e>0}\|R_{\mu,\e}^s\|_{L^2(\mu)\to L^2(\mu)}<\infty.
$$
In the case $f\equiv1$ the function $R_{\mu}^s1(x)$ is said to be {\it the $s$-Riesz transform (potential) of $\mu$} and is denoted by $R^s\mu(x)$.
If $\mu$ has continuous density with respect to the Lebesgue measure $m_d$ in $\mathbb R^d$, that is if $d\mu(x)=\rho(x)\,dm_d(x)$
with $\rho(x)\in C(\mathbb R^d)$,
% $B(x,r):=\{y\in\R^d: |y-x|<r\}$.
then $R^s\mu(x)$ exists for every $x\in\R^d$.

By $C,c$, possibly with indexes, we denote various constants which may depend only on $d$ and $s$.

We consider the following well-known conjecture.

\begin{conjecture}\label{conj11} Let $\mu$ be a nonnegative finite Borel measure with compact support and continuous density with respect to the Lebesgue measure in $\mathbb R^d$.
There is a constant $C$ such that
\begin{equation}\label{f2}
\sup_{x\in\R^d}|R^s\mu(x)|\le C\sup_{x\in\supp\mu}|R^s\mu(x)|.
\end{equation}
\end{conjecture}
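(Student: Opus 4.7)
For $0<s<1$ the Riesz kernel is itself a gradient: since $\nabla_x|x-y|^{1-s}=-(1-s)(y-x)/|y-x|^{s+1}$, one has $R^s\mu=\nabla\Phi$ where
\[
\Phi(x):=-\frac1{1-s}\int|x-y|^{1-s}\,d\mu(y),
\]
so the Riesz transform is conservative on $\R^d$. A direct computation yields $\Delta\Phi(x)=-(d-1-s)\int|x-y|^{-s-1}\,d\mu(y)$ on $\R^d\setminus\supp\mu$, so $\Phi$ is superharmonic for $d\ge 2$ (as $d-1-s>0$) and subharmonic---i.e.\ convex---for $d=1$ (as $d-1-s=-s<0$). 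This sub/superharmonic potential structure, which is special to the range $s<1$, is what I would build the proof around.

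\textbf{One-dimensional case.} When $d=1$, $\Phi$ is convex on each component $I$ of $\R\setminus\supp\mu$, hence $R^s\mu=\Phi'$ is monotone on $I$. On a bounded gap the extrema of $R^s\mu$ on $\overline I$ sit at endpoints, which lie in $\supp\mu$; on an unbounded component the only finite endpoint is again in $\supp\mu$, while $R^s\mu\to 0$ at infinity. Thus \eqref{f2} already holds with $C=1$ for $d=1$.

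\textbf{Higher dimensions.} For $d\ge 2$, set $M:=\sup_{\supp\mu}|R^s\mu|$ and take $x_0\notin\supp\mu$ with $v:=R^s\mu(x_0)$ large. Writing $\vec e=v/|v|$, consider the scalar $h(x):=\vec e\cdot R^s\mu(x)=\partial_{\vec e}\Phi(x)$; by construction $h(x_0)=|v|$ and $|h|\le M$ on $\supp\mu$. Since $R^s\mu=\nabla\Phi$ is conservative, any line integral along a path from $x_0$ to an $x_*\in\supp\mu$ equals $\Phi(x_*)-\Phi(x_0)$, which converts pointwise control of $|R^s\mu|$ on $\supp\mu$ into control of differences of $\Phi$. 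To bound $|v|$ itself, I would split $\mu=\mu_{\text{near}}+\mu_{\text{far}}$ with $\mu_{\text{near}}$ supported in a ball around $x_0$ of radius comparable to $r_0:=\dist(x_0,\supp\mu)$: the far part produces a slowly varying field whose value at $x_0$ differs from its value at a nearest point of $\supp\mu$ only by a controlled amount, while the near part is to be handled using the sign of $\Delta\Phi$ together with the strict superharmonicity of $\Phi$ on the empty ball around $x_0$.

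\textbf{Main obstacle.} The fundamental difficulty in dimension $\ge 2$ is that $\Delta\Phi\le 0$ controls only the \emph{mean} over directions of $\partial_{\vec e}^2\Phi$, not the particular $\vec e$ picked above, so superharmonicity of $\Phi$ does not by itself yield a pointwise bound on $|\nabla\Phi|$. I expect the resolution to require a genuine decomposition---perhaps a Whitney/stopping-time construction organized around the scale $r_0$ or around level sets of $|R^s\mu|$---together with an iteration that converts local (cube-by-cube) estimates into the global bound \eqref{f2}. Ensuring that the resulting constant $C$ depends only on $d$ and $s$, and not on further features of $\mu$, is where I anticipate the bulk of the technical work will lie.
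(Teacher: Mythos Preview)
Your one-dimensional argument is correct and pleasant: for $d=1$ the potential $\Phi$ is convex off $\supp\mu$, so $R^s\mu=\Phi'$ is monotone on each complementary interval and \eqref{f2} holds with $C=1$.

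For $d\ge 2$, however, what you have written is not a proof but a description of an obstacle, and the obstacle is real. The function $h=\vec e\cdot R^s\mu=\partial_{\vec e}\Phi$ is precisely a single component of the Riesz transform (in a rotated frame), and the paper shows in Proposition~\ref{pr21} that for $0<s<d-1$ no maximum principle holds for individual components: one can make $\sup_{\R^d}|R_1^s\mu|$ arbitrarily large while $\sup_{\supp\mu}|R_1^s\mu|$ stays bounded. So the superharmonicity of $\Phi$ cannot be upgraded to a bound on a fixed directional derivative, and the near/far splitting you sketch does not close: the ``near'' part is exactly the regime where the counterexamples of Proposition~\ref{pr21} live. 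Your concluding suggestion of a Whitney or stopping-time construction is too vague to count as an argument; nothing in the proposal indicates what quantity would be iterated or why the iteration would terminate with a constant depending only on $d$ and $s$.

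The paper's proof of Theorem~\ref{th23} proceeds along an entirely different line. It argues by contradiction: if \eqref{f2} failed, one could produce a sequence $\mu_n$ with $\sup_{\R^d}|R^s\mu_n|=1$ and $\sup_{\supp\mu_n}|R^s\mu_n|\to 0$. Lemma~\ref{le22} (a flux identity on spheres) together with a Cotlar-type inequality forces the densities $\theta_{\mu_n}=\sup_{x,r}\mu_n(B(x,r))/r^s$ to be bounded above and below. After rescaling so that a fixed ball has mass bounded below, a weak limit $\nu$ exists; it has no point masses, satisfies $\nu(B(x,r))\le Cr^s$, has $R^s_\nu$ bounded on $L^2(\nu)$ (via the $T1$ theorem), and satisfies $\langle R^s(\psi\nu),1\rangle_\nu=0$ for all Lipschitz $\psi$ with $\int\psi\,d\nu=0$. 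In other words $\nu$ is a nontrivial \emph{reflectionless} measure for the $s$-Riesz kernel. The contradiction comes from the theorem of Prat and Tolsa that no such measure exists when $0<s<1$. This blow-up/compactness route, with the Prat--Tolsa nonexistence result as the endgame, is the missing idea in your proposal.
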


For $s=d-1$ the proof is simple. Obviously,
\begin{equation}\label{f3}
R^s\mu(x)=\nabla U_\mu^s(x),
\end{equation}
where
$$
U_\mu^s(x)=\frac1{s-1}\int\frac{d\mu(y)}{|y-x|^{s-1}},\ s\ne1,\ \ U_\mu^1(x)=-\int\log|y-x|\,d\mu(y).
$$
Thus each component of the vector function $R^s\mu(x),\ s=d-1$, is harmonic in $\R^d\setminus\supp\mu$. Applying the maximum principle for harmonic functions we get \eqref{f2}.

For $d-1<s<d$, the relation \eqref{f2} was established in \cite{ENV} under stronger assumption that $\rho\in C^\infty(\R^d)$. In fact it was proved that \eqref{f2} holds for each component of $R^s\mu$ with $C=1$ as in the case $s=d-1$. The proof is based on the formula which recovers a density $\rho$ from $U_\mu^s$. But this method does not work for $s<d-1$.

The problem under consideration has a very strong motivation and also is of independent interest. In \cite{ENV} it is an important ingredient of the proof of the following theorem. By $\H^s$ we denote the $s$-dimensional Hausdorff measure.

\begin{theorem}[\cite{ENV}]\label{th12} Let $d-1<s<d$, and let $\mu$ be a positive finite Borel measure such that $\H^s(\supp\mu)<\infty$. Then
$\|R^s\mu\|_{L^\infty(m_d)}=\infty$ (equivalently, $\|R^s_\mu\|=\infty$).
\end{theorem}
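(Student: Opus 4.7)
The strategy is a proof by contradiction combining the maximum principle \eqref{f2} — available for $d-1<s<d$ per the discussion of \cite{ENV} preceding the theorem — with a blow-up at a point of positive upper $s$-density. Suppose, toward contradiction, that $\mu$ is a nonzero positive finite Borel measure with $\mathcal H^s(\operatorname{supp}\mu)<\infty$ and $M:=\|R^s\mu\|_{L^\infty(m_d)}<\infty$. The goal is to produce a sequence of points where $|R^s\mu|$, or that of a tangent measure, must diverge, contradicting $M<\infty$.

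First I would regularize and extract a uniform density bound. Convolving with a smooth bump $\phi_\varepsilon$ yields $\mu_\varepsilon:=\mu*\phi_\varepsilon$ of continuous density satisfying $\|R^s\mu_\varepsilon\|_{L^\infty(m_d)}\le M$ (the Riesz transform commutes with translations). The maximum principle \eqref{f2} applied to $\mu_\varepsilon$ yields $\sup_{\mathbb R^d}|R^s\mu_\varepsilon|\le CM$ pointwise — removing the ambiguity between pointwise and essential supremum that came with the $L^\infty(m_d)$ bound — uniformly in $\varepsilon$. Since $R^s\mu_\varepsilon=\nabla U_{\mu_\varepsilon}^s$ by \eqref{f3}, a standard argument converts this gradient bound on the potential into a uniform $s$-Ahlfors estimate $\mu(B(x,r))\le C(d,s,M)\,r^s$ for all $x\in\mathbb R^d$ and $r>0$, upon letting $\varepsilon\downarrow 0$. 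Next I would blow up: pick $x_0\in\operatorname{supp}\mu$ with $0<\Theta^{*s}(\mu,x_0)<\infty$ — such points exist because $\mu\ne 0$ and $\mathcal H^s(\operatorname{supp}\mu)<\infty$ — and form $\mu_k(E):=r_k^{-s}\mu(x_0+r_kE)$ along $r_k\downarrow 0$. The scaling identity $R^s\mu_k(y)=R^s\mu(x_0+r_ky)$ preserves $\|R^s\mu_k\|_{L^\infty(m_d)}\le M$, and the density bound passes to a weak-$*$ subsequential limit $\mu_{k_j}\rightharpoonup^*\nu$, a nonzero positive Radon measure on $\mathbb R^d$ with $\nu(B(0,R))\le CR^s$ for all $R>0$, with $\|R^s\nu\|_{L^\infty(m_d)}\le M$, and with $\Theta^{*s}(\nu,0)>0$.

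The hard step — and the main obstacle — is to rule out the existence of such a tangent measure $\nu$ when $s>d-1$. I would iterate the blow-up, extracting further tangent measures at typical points of $\operatorname{supp}\nu$, to obtain a self-similar (cone) measure $\nu_\infty$ retaining all the above properties. Here the codimension hypothesis $d-s<1$ enters decisively: a set of locally finite $\mathcal H^s$ measure cannot separate any ball, so there are probe points $y_n\notin\operatorname{supp}\nu_\infty$ approaching any density point from essentially all directions. Exploiting the self-similarity to force contributions of definite sign and direction from every dyadic scale, a direct computation of $R^s\nu_\infty(y_n)$ yields $|R^s\nu_\infty(y_n)|\to\infty$, contradicting the $L^\infty$ bound. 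Quantifying this self-similar divergence is the technical heart of Theorem \ref{th12}; the regularization, density bound, and tangent-measure extraction are soft, standard ingredients, whereas the cone-measure analysis in codimension less than $1$ is what is genuinely specific to the range $s>d-1$ and constitutes the real content of the result.
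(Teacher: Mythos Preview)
Theorem~\ref{th12} is quoted from \cite{ENV} and is not proved in the present paper, so there is no proof here against which to compare your proposal. The paper only records that the maximum principle \eqref{f2} for $d-1\le s<d$ is ``an important ingredient'' of the argument in \cite{ENV}.

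On the proposal itself: the soft steps --- regularization, extraction of the growth bound $\mu(B(x,r))\le C\,r^s$ from uniform $L^\infty$ control of $R^s\mu$, and the tangent-measure blow-up at a point of positive upper $s$-density --- are standard and correctly set up. (A minor point: once $R^s\mu_\varepsilon$ is continuous, the $L^\infty(m_d)$ bound is already a pointwise bound everywhere, so invoking \eqref{f2} at that stage is superfluous; the maximum principle enters the \cite{ENV} argument elsewhere.) The genuine gap is your final step, which you yourself flag as the heart of the matter: you assert that iterated blow-ups produce a self-similar cone measure and that probing it from the complement forces $|R^s\nu_\infty(y_n)|\to\infty$, but you supply no mechanism for either claim. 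It is not explained why repeated tangent measures should stabilize to a cone, nor why the dyadic-scale contributions to $R^s\nu_\infty$ at the probe points should align in sign and direction rather than cancel. You have correctly located where the substance of the theorem lies, but the sketch does not indicate how that gap is actually bridged.
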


If $s$ is integer, the conclusion of Theorem \ref{th12} is incorrect. For $0<s<1$ Theorem \ref{th12} was proved by Prat \cite{P} using different approach. The obstacle for extension of this result to all noninteger $s$ between 1 and $d-1$ is the lack of the maximum principle. The same issue concerns the quantitative version of Theorem \ref{th12} obtained by Jaye, Nazarov, and Volberg \cite{JNV}.

The maximum principle is important for other problems on the connection between geometric properties of a measure and boundedness of the operator $R_{\mu}^s$ on $L^2(\mu)$ -- see for example \cite{JNV}, \cite{JNRT}, \cite{NToV1}, \cite{NToV2}. All these results are established for $d-1<s<d$ or $s=d-1$.

The problem of the lower estimate for $\|R^s_\mu\|$ in terms of the Wolff energy (a far going development of Theorem \ref{th12}) which is considered in \cite{JNV}, \cite{JNRT}, was known for $0<s<1$. And the results in \cite{NToV1}, \cite{NToV2} are $(d-1)$-dimensional analogs of classical facts known for $s=1$ (in particular, \cite{NToV2} contains the proof of the analog of the famous Vitushkin conjecture in higher dimensions). For $0<s\le1$, the proofs essentially use the Melnikov curvature techniques and do not require the maximum principle. But this tool is absent for $s>1$.

At the same time the validity of the maximum principle itself remained open even for $0<s<1$. It is especially interesting because the analog of \eqref{f2} does not hold for each component of $R^s\mu$ when $0<s<d-1$ unlike the case $d-1\le s<d$ -- see Proposition \ref{pr21} below.

We prove Conjecture \ref{conj11} for $0<s<1$ in Section \ref{sect2} (Theorem \ref{th23}). The proof is completely different from the proof in the case $d-1\le s<d$. In Section \ref{sect3} we prove Conjecture \ref{conj11} in the special case of radial density of $\mu$ (that is when $d\mu=h(|x|)\,dm_d(x)$), but for all $s\in(0,d)$. Section \ref{sect4} contains an example showing that Conjecture \ref{conj11} is incorrect for non-positive measures, even for radial measures with $C^\infty$-density (note that in \cite[Conjecture~7.3]{VE} Conjecture \ref{conj11} was formulated for all finite signed measures with compact support and $C^\infty$-density).

\section{The case $0<s<1$}\label{sect2}

We start with a statement showing that the maximum principle fails for every component of $R^s\mu$ if $0<s<d-1$.

\begin{proposition}\label{pr21} For any $d\ge2$, $0<s<d-1$, and any $M>0$, there is a positive measure $\mu$ in $\R^d$ with $C^\infty$-density such that
\begin{equation}\label{f21}
\sup_{x\in\R^d}|R_1^s\mu(x)|>M\sup_{x\in\supp\mu}|R_1^s\mu(x)|,
\end{equation}
where $R_1^s\mu$ is the first component of $R^s\mu$.
\end{proposition}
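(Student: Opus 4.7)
The plan is to exploit symmetry: if $\mu$ has density even in the $y_1$ variable and concentrated in a thin slab $\{|y_1|\le\e\}$, then $R_1^s\mu$ vanishes on the hyperplane $\{x_1=0\}$ and so is forced to be small throughout the thin support, while at a point at distance $\asymp 1$ from the support the transform is controlled only by the total mass of $\mu$. Concretely, fix $\f\in C_c^\infty(\R)$ even, non-negative, with $\f(0)>0$ and $\supp\f\subset[-1,1]$, and $\psi\in C_c^\infty(\R^{d-1})$ radial, non-negative, with $\psi(0)>0$ and $\supp\psi\subset B^{d-1}(0,1/2)$. For $\e\in(0,1/4)$ set $\rho_\e(y_1,y')=\f(y_1/\e)\psi(y')$ and $d\mu_\e=\rho_\e\,dy$; then $\supp\mu_\e\subset[-\e,\e]\times\overline{B^{d-1}(0,1/2)}$, $\|\rho_\e\|_\infty$ is independent of $\e$, and $\|\rho_\e\|_{L^1}=c_0\,\e$.

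The evenness of $\rho_\e$ in $y_1$ implies $R_1^s\mu_\e(-x_1,x')=-R_1^s\mu_\e(x_1,x')$, so $R_1^s\mu_\e$ vanishes on $\{x_1=0\}$. For $x=(x_1,x')\in\supp\mu_\e$ (so $|x_1|\le\e$) the mean value theorem in $x_1$, followed by differentiation of the kernel under the integral, gives
$$
|R_1^s\mu_\e(x)|\le \e\sup_{|\xi|\le\e}|\partial_{x_1}R_1^s\mu_\e(\xi,x')|\le (s+2)\,\e\,\|\rho_\e\|_\infty\sup_{|\xi|\le\e}\int_{\supp\mu_\e}\frac{dy}{|y-(\xi,x')|^{s+1}}.
$$
The thinness of the slab caps the last integral: a direct polar-coordinate computation in $\R^{d-1}$, splitting the $y$-integration into $\{|y-x|<2\e\}$ and $\{|y-x|\ge 2\e\}$, yields
$$
\int_{\supp\mu_\e}\frac{dy}{|y-x|^{s+1}}\le C\,\e^{\min(1,\,d-1-s)}
$$
uniformly in $x\in\supp\mu_\e$ (with an extra $\log(1/\e)$ factor at the borderline $s=d-2$). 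Hence $\sup_{\supp\mu_\e}|R_1^s\mu_\e|\le C\,\e^{1+\min(1,\,d-1-s)}$.

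On the other hand, at $x_0=(1,0,\dots,0)$ one has $|y-x_0|\asymp 1$ uniformly on $\supp\mu_\e$ and $|y_1|\le\e$, so
$$
R_1^s\mu_\e(x_0)=-\int\frac{\rho_\e(y)\,dy}{|y-x_0|^{s+1}}+O\bigl(\e\|\rho_\e\|_{L^1}\bigr).
$$
The first term has magnitude comparable to $\|\rho_\e\|_{L^1}=c_0\,\e$, while the remainder is $O(\e^2)$, so $|R_1^s\mu_\e(x_0)|\ge c'\e$ for all sufficiently small $\e$. Combining with the previous paragraph, the ratio in \eqref{f21} is at least $(c'/C)\,\e^{-\min(1,\,d-1-s)}$, which diverges as $\e\to0^+$ for every $s\in(0,d-1)$, so $\mu=\mu_\e$ with $\e$ small enough produces \eqref{f21} for any prescribed $M$.

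The delicate step is the uniform potential estimate in the middle paragraph: the entire $\e$-saving that drives the violation of the maximum principle comes from the slab being thin in the $y_1$-direction, and the estimate precisely loses its bite at $s=d-1$ (where $\min(1,d-1-s)=0$), in agreement with the validity of the maximum principle in that range.
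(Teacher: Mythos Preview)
Your proof is correct and follows essentially the same strategy as the paper: both constructions concentrate a smooth positive measure in a thin slab about $\{y_1=0\}$ (the paper uses a $\delta$-neighborhood of a $(d-1)$-disk with unit total mass rather than bounded density, which just shifts the normalization). The only technical difference is that you extract the smallness of $R_1^s\mu$ on the support via evenness and the mean value theorem in $x_1$, whereas the paper uses directly that $|y_1-x_1|\lesssim\delta$ in the kernel's numerator; both routes lead to the same estimates and the same conclusion.
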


\begin{proof} Let $E=\{(x_1,\dots,x_d)\in\R^d:x_1=0,\ x_2^2+\dots+x_d^2\le1\}$, and let $E_\d$, $\d>0$, be a $\d$-neighborhood of $E$ in $\R^d$. Let $\mu=\mu_\d$ be a positive measure supported on $\overline{E_\d}$ with $\mu(\overline{E_\d})=1$ and with $C^\infty$-density $\rho(x)$ such that $\rho(x)<2/\text{vol}(E_\d)\le C_d/\d$. Then
$$
|R_1^s\mu(x')|>A_d,\ \text{where }x'=(1,0,\dots,0),\ \ 0<\d<1/2.
$$

On the other hand, for $x\in\supp\mu$ integration by parts yields
\begin{align*}
|R_1^s\mu(x)|&<\int_{|y-x|<\d}\frac1{|y-x|^s}\,d\mu(y)+\int_{|y-x|\ge\d}\frac\d{|y-x|^{s+1}}\,d\mu(y)\\
&=\frac{\mu(B(x,\d))}{\d^s}+s\int_0^\d\frac{\mu(B(x,r))}{r^{s+1}}\,dr+\d(s+1)\int_\d^\infty\frac{\mu(B(x,r))}{r^{s+2}}\,dr\\
&<C\frac{C_d}{\d}\frac{\d^d}{\d^s}+\frac{Cs}{\d}\int_0^\d\frac{r^d}{r^{s+1}}\,dr+
\frac{C\d(s+1)}{\d}\int_\d^2\frac{r^{d-1}\d}{r^{s+2}}\,dr+C\d.
\end{align*}
Here by $C$ we denote different constants depending only on $d$, and $B(x,r):=\{y\in\R^d:|y-x|<r\}$. We have
$$
\d\int_\d^2\frac{r^{d-1}}{r^{s+2}}\,dr=\begin{cases}\d\ln\dfrac2\d, &s=d-2,\\
\dfrac1{d-s-2}(2^{d-s-2}\d-\d^{d-s-1}), &s\ne d-2.
\end{cases}
$$
Thus, all terms in the right-hand side of the estimate for $|R_1^s\mu(x)|$ tend to 0 as $\d\to0$, and we may choose $\d$ and a corresponding measure $\mu$ satisfying \eqref{f21}.
\end{proof}

We need the following lemma. The notation $A\approx B$ means that $cA<B<CB$ with constants $c,C$ which may depend only on $d$ and $s$.

\begin{lemma}\label{le22} Let $\mu$ be a non-negative measure in $\R^d$ with continuous density and compact support. Let $0<s<d-1$. Then for every ball $B=B(x_0,r)$,
\begin{equation}\label{f22}
\bigg|\int_{\partial B}(R^s\mu\cdot\bold n)\,d\s\bigg|\approx r^{d-s-1}\mu(B)+
r^d\int_r^\infty\frac{d\mu(B(x_0,t))}{t^{s+1}},
\end{equation}
where $\bold n$ is the outer normal vector to $B$ and $\s$ is the surface measure on $\partial B$.
\end{lemma}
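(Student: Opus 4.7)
The plan is to transform the surface flux into a double integral via Fubini and a pointwise divergence theorem, and then estimate the resulting scalar kernel $\int_B |y-x|^{-(s+1)}\,dm_d(x)$ in two regimes. The starting observation is that for each fixed $y$, the vector field $V_y(x):=(y-x)/|y-x|^{s+1}$ satisfies
\[
\operatorname{div}_x V_y(x) = -(d-s-1)\,|y-x|^{-(s+1)}, \qquad x\ne y,
\]
and since $s+1<d$, the right-hand side is locally integrable in $x$.

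I would first fix $y$ and compute $J(y):=\int_{\partial B}V_y\cdot\mathbf n\,d\s$. For $y\notin\overline B$, the divergence theorem yields $J(y)=-(d-s-1)\int_B|y-x|^{-(s+1)}\,dm_d(x)$ at once. For $y\in B$, I apply the divergence theorem on $B\setminus\overline{B(y,\e)}$ and let $\e\to 0^+$: a direct calculation shows $V_y\cdot\mathbf n\equiv -\e^{-s}$ on $\partial B(y,\e)$ with $\mathbf n$ the outer normal of $B(y,\e)$, so after reversing orientation one gets a spurious boundary term of order $\e^{d-1-s}$ that vanishes precisely because $s<d-1$; meanwhile the volume integral converges by monotone convergence, and the same identity persists. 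Fubini (whose hypotheses are easy to check from compactness of $\supp\mu$ together with $s+1<d$) then yields
\[
\int_{\partial B}(R^s\mu\cdot\mathbf n)\,d\s = -(d-s-1)\int I(y)\,d\mu(y), \qquad I(y):=\int_B \frac{dm_d(x)}{|y-x|^{s+1}}.
\]

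It then remains to show $I(y)\approx r^{d-s-1}$ for $y\in B$ and $I(y)\approx r^d|y-x_0|^{-(s+1)}$ for $y\notin B$. For $y\in B$, translating by $y$ reduces the task to estimating $\int_{B(w,r)}|z|^{-(s+1)}\,dz$ with $|w|\le r$; the upper bound $\le\int_{B(0,2r)}|z|^{-(s+1)}\,dz$ is trivial, and the matching lower bound comes from spherical coordinates centered at the origin, the worst case $|w|$ near $r$ reducing to $\int_{\omega\cdot w>0}(2\omega\cdot w)^{d-s-1}\,d\omega$, which is a positive constant times $r^{d-s-1}$. For $y\notin B$ with $t:=|y-x_0|>r$, the crude bound $|y-x|\approx t$ for $x\in B$ (treating $r<t<2r$ and $t\ge 2r$ separately) gives the stated estimate. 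Splitting the $\mu$-integral into $\{y\in B\}$ and $\{y\notin B\}$, and rewriting the latter by the Lebesgue--Stieltjes layer-cake identity, finally gives
\[
\int I(y)\,d\mu(y) \approx r^{d-s-1}\mu(B) + r^d\int_r^\infty \frac{d\mu(B(x_0,t))}{t^{s+1}}.
\]
Since both summands are nonnegative and $d-s-1>0$ depends only on $d,s$, taking absolute values yields \eqref{f22}. The only genuinely delicate point is the excision argument when $y\in B$, where the hypothesis $s<d-1$ enters essentially to kill the boundary remainder $\omega_{d-1}\e^{d-1-s}$; everything else is bookkeeping.
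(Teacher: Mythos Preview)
Your proposal is correct and follows essentially the same route as the paper: both reduce the surface flux to the volume integral of $(d-s-1)\,|y-x|^{-(s+1)}$ via the divergence identity $\nabla_x\cdot\frac{y-x}{|y-x|^{s+1}}=-\frac{d-s-1}{|y-x|^{s+1}}$, then estimate $\int_B|y-x|^{-(s+1)}\,dm_d(x)$ in the two regimes $y\in B$ and $y\notin B$ exactly as you do. The only difference is in how the singularity at $x=y$ is handled: the paper mollifies the kernel by a smooth radial cutoff $\phi_\e$ before applying the divergence theorem and then lets $\e\to0$, whereas you excise the ball $B(y,\e)$ and kill the extra boundary term $\omega_{d-1}\e^{d-1-s}$ directly---both devices exploit the hypothesis $s<d-1$ in the same way, and your version is arguably a touch more elementary.
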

\begin{proof} We will use the Ostrogradsky-Gauss Theorem and differentiation under the integral sign. To justify these operations and make an integrand sufficiently smooth, we approximate $K(x)=x/|x|^{s+1}$ by the smooth kernel $K_\e$ in the following standard way. Let $\phi(t)$, $t\ge0$, be a $C^\infty$-function such that $\phi(t)=0$ as $0\le t\le1$, $\phi(t)=1$ as $t\ge2$, and $0\le\phi'(t)\le2$, $t>0$. Let $\phi_\e(t):=\phi(\frac t\e)$, $K_\e(x):=\phi_\e(|x|)K(x)$, and $\widetilde{R}_\e^s\mu:=K_\e\ast\mu$. We have
$$
\int_{\partial B}(\widetilde{R}_\e^s\mu\cdot\bold n)\,d\s=
\int_B\nabla\cdot\widetilde{R}_\e^s\mu(x)\,dm_d(x)=
\int_B\biggl[\int_{\R^d}\nabla\cdot\phi_\e(|y-x|)\frac{y-x}{|y-x|^{s+1}}\,d\mu(y)\biggr]dm_d(x).
$$
The inner integral is equal to
$$
\int_{|y-x|\le2\e}\nabla\cdot\phi_\e(|y-x|)\frac{y-x}{|y-x|^{s+1}}\,d\mu(y)
+\int_{|y-x|>2\e}\nabla\cdot\frac{y-x}{|y-x|^{s+1}}\,d\mu(y)=:I_1(x)+I_2(x).
$$
One can easily see that
$$
\bigg|\frac{\partial}{\partial x_i}\bigg[\phi_\e(|y-x|)\frac{y-x}{|y-x|^{s+1}}\bigg]\bigg|<
C\bigg[\frac1\e\frac1{|y-x|^{s}}+\frac1{|y-x|^{s+1}}\bigg]<\frac C{|y-x|^{s+1}},
\quad |y-x|\le2\e.
$$
Hence,
\begin{align*}
|I_1(x)|&<C\int_{|y-x|\le2\e}\frac1{|y-x|^{s+1}}\,d\mu(y)<C\int_0^{2\e}\frac1{t^{s+1}}\,d\mu(B(x,t))\\
&\approx\frac{\mu(B(x,2\e))}{(2\e)^{s+1}}+\int_0^{2\e}\frac{\mu(B(x,t))}{t^{s+2}}\,dt.
\end{align*}
Since $\mu$ has a continuous density with respect to $m_d$, we have $\mu(B(x,t))<A_{\mu,B}t^d$ as $t\le2\e<1$, $x\in B$. Taking into account that $s<d-1$, we obtain the relation $\int_BI_1(x)\,dm_d(x)\to0$ as $\e\to0$.

To estimate the integral of $I_2(x)$ we use the equality $\nabla\cdot\frac x{|x|^{s+1}}=\frac {d-s-1}{|x|^{s+1}}$. Thus,
\begin{align*}
\bigg|\int_BI_2(x)\,dm_d(x)\bigg|&=C\int_B\bigg[\int_{|y-x|>2\e}\frac{d\mu(y)}{|y-x|^{s+1}}\bigg]dm_d(x)\\
&=C\bigg(\int_{B(x_0,r+2\e)}\bigg[\int_{B\cap\{|y-x|>2\e\}}\frac{dm_d(x)}{|y-x|^{s+1}}\bigg]d\mu(y)\\
&+\int_{\R^d\setminus B(x_0,r+2\e)}\bigg[\int_B\frac{dm_d(x)}{|y-x|^{s+1}}\bigg]d\mu(y)\bigg)
=:C(J_1+J_2).
\end{align*}
Obviously,
$$
\int_B\frac{dm_d(x)}{|y-x|^{s+1}}\approx\begin{cases}\displaystyle\int_0^r\dfrac{t^{d-1}\,dt}{t^{s+1}}\approx r^{d-s-1}, &|y-x_0|\le r,\\
\dfrac{r^d}{|y-x_0|^{s+1}}, &|y-x_0|>r.\end{cases}
$$
In order to estimate $J_1$ we note that for sufficiently small $\e$,
$$
\int_{B\cap\{|y-x|>2\e\}}\frac{dm_d(x)}{|y-x|^{s+1}}\approx \int_{B}\frac{dm_d(x)}{|y-x|^{s+1}}
\approx r^{d-s-1},\quad y\in B(x_0,r+2\e).
$$
Hence, $J_1\approx r^{d-s-1}\mu(B(x_0,r+2\e))$. Moreover,
$$
J_2\approx\int_{\R^d\setminus B(x_0,r+2\e)}\dfrac{r^d}{|y-x_0|^{s+1}}\,d\mu(y)=r^d\int_{r+2\e}^\infty\frac{d\mu(B(x_0,t))}{t^{s+1}}.
$$
Passing to the limit as $\e\to0$, we get \eqref{f22}
\end{proof}

Now we are ready to prove our main result.

\begin{theorem}\label{th23} Let $\mu$ be a non-negative measure in $\R^d$ with continuous density and compact support. Let $0<s<1$. Then \eqref{f2} holds with a constant $C$ depending only on $d$ and $s$.
\end{theorem}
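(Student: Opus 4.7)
I would prove the theorem by contradiction. Let $M := \sup_{\supp\mu}|R^s\mu|$ and suppose, for contradiction, that $|R^s\mu(x_0)| = K$ at some $x_0 \in \R^d\setminus\supp\mu$ with $K > CM$, where $C = C(d,s)$ is to be chosen large. Set $r_0 := \dist(x_0, \supp\mu)$ and pick $x^* \in \supp\mu$ with $|x_0 - x^*| = r_0$. Rotate coordinates so that $R^s\mu(x_0) = Ke_1$.

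The first step is to use Lemma~\ref{le22} to derive, for each $r > 0$, a lower bound on $\sup_{\partial B(x^*, r)}|R^s\mu|$: combining the lemma with the trivial upper bound $|\text{flux}| \le \sigma_{d-1}r^{d-1}\sup_{\partial B(x^*,r)}|R^s\mu|$ gives
\begin{equation*}
\sup_{\partial B(x^*, r)}|R^s\mu| \gtrsim r^{-s}\mu(B(x^*,r)) + r\int_{|y-x^*|>r}\frac{d\mu(y)}{|y-x^*|^{s+1}}.
\end{equation*}
An analogous estimate holds with $x^*$ replaced by $x_0$, and in that case for $r\le r_0$ the first term vanishes.

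The second step is to convert this into an $s$-density bound. The continuity of $R^s\mu$ at $x^*$ (a consequence of the continuous density of $\mu$ and $s<d$) yields $\lim_{r\to 0}\sup_{\partial B(x^*, r)}|R^s\mu| = |R^s\mu(x^*)| \le M$, and more quantitatively one expects $\sup_{\partial B(x^*, r)}|R^s\mu| \lesssim M$ on a range of $r$ up to the scale at which the sphere substantially leaves $\supp\mu$. This yields $\mu(B(x^*,r)) \lesssim M r^s$ on the corresponding range, which propagates via translation (since $B(x_0,r) \subset B(x^*,2r)$ for $r \ge r_0$) to $\mu(B(x_0,r)) \lesssim M r^s$.

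The third step is to bound $K$ from above. From the trivial estimate $K \le \int|y-x_0|^{-s}d\mu(y) = s\int_{r_0}^\infty t^{-s-1}\mu(B(x_0,t))\,dt$, a dyadic decomposition together with the density bound gives $K \lesssim M\cdot \log(\diam(\supp\mu)/r_0)$. Eliminating the logarithmic factor is the main obstacle, and is where the restriction $0<s<1$ enters essentially: one must sharpen the crude estimate by exploiting the vectorial cancellation in $R^s\mu(x_0) = \int(y-x_0)/|y-x_0|^{s+1}d\mu(y)$, e.g., by grouping contributions by direction and applying Lemma~\ref{le22} at points of $\supp\mu$ selected in the dominant direction of $R^s\mu(x_0)$. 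The subcriticality of the kernel $|y-x|^{-s}$ for $s<1$ should ensure that the scale-by-scale directional contributions telescope into a bounded total estimate rather than accumulating a logarithm (this cancellation is unavailable for $s\ge 1$, which is consistent with the conjecture remaining open there). Balancing this refined bound against the lower bounds from Step~1 yields $K \le CM$, contradicting the assumption and completing the proof.
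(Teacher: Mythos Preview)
Your proposal has a genuine gap: the crucial third step is not a proof but a wish. You explicitly identify ``eliminating the logarithmic factor'' as ``the main obstacle,'' and then offer only the suggestion that one should ``exploit vectorial cancellation'' and that ``subcriticality of the kernel $|y-x|^{-s}$ for $s<1$ \emph{should} ensure'' that the contributions telescope. None of this is carried out, and there is no indication of a concrete mechanism by which $s<1$ removes the logarithm. This is precisely the heart of the theorem---it is where the restriction $0<s<1$ must enter---and you have left it blank.

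There is also a problem in Step~2. The point $x^*$ is the nearest point of $\supp\mu$ to $x_0$, hence lies on $\partial(\supp\mu)$; therefore for \emph{every} $r>0$ the sphere $\partial B(x^*,r)$ already contains points outside $\supp\mu$, and your phrase ``up to the scale at which the sphere substantially leaves $\supp\mu$'' does not define any quantitative range. Continuity of $R^s\mu$ does give $\sup_{\partial B(x^*,r)}|R^s\mu|\le 2M$ for $r\le r_1$, but $r_1$ depends on the (uncontrolled) modulus of continuity of $R^s\mu$, so the resulting bound $\mu(B(x^*,r))\lesssim Mr^s$ is not uniform in $\mu$ and cannot feed into a quantitative contradiction.

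For comparison, the paper's argument is entirely different and does not attempt a direct estimate. It argues by contradiction with a sequence of measures $\mu_n$ normalized so that $\sup_{\R^d}|R^s\mu_n|=1$ and $\sup_{\supp\mu_n}|R^s\mu_n|\le 1/n$; Lemma~\ref{le22} and a Cotlar-type inequality show the densities $\theta_{\mu_n}$ are uniformly bounded above and below. After rescaling and passing to a weak limit $\nu$, one checks that $\nu$ is nontrivial, has linear growth, supports a bounded operator $R^s_\nu$, and satisfies $\langle R^s(\psi\nu),1\rangle_\nu=0$ for all mean-zero Lipschitz $\psi$---that is, $\nu$ is \emph{reflectionless}. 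The restriction $0<s<1$ enters only through the theorem of Prat--Tolsa that no such measure exists in this range, which yields the contradiction. Your direct approach, even if the gaps were filled, would constitute a genuinely new proof; as written it is not one.
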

\begin{proof} Let us sketch the idea of proof. Let a measure $\mu$ be such that $\mu(B(y,t))\le Ct^s$, $y\in\R^d$, $t>0$. For Lipschitz continuous compactly supported functions $\f$, $\psi$, define the form $\langle R^s(\psi\mu),\f\rangle_\mu$ by the equality
$$
\langle R^s(\psi\mu),\f\rangle_\mu=\frac12\iint_{\R^d\times\R^d}\frac{y-x}{|y-x|^{s+1}}(\psi(y)\f(x)-\psi(x)\f(y))\,d\mu(y)\,d\mu(x);
$$
the double integral exists since $|\psi(y)\f(x)-\psi(x)\f(y)|\le C_{\psi,\f}|x-y|$. If we assume in addition that $\int\psi\,d\mu=0$, we may define $\langle R^s(\psi\mu),\f\rangle_\mu$ for any (not necessarily compactly supported) bounded Lipschitz continuous function $\f$ on $\R^d$; here we follow \cite{JN}. Let $\supp\psi\in B(0,R)$. For $|x|>2R$ we have
\begin{equation*}
\begin{split}
\bigg|\int_{\R^d}\frac{y-x}{|y-x|^{s+1}}\psi(y)\,d\mu(y)\bigg|&=\bigg|\int_{\R^d}\bigg[\frac{y-x}{|y-x|^{s+1}}+ \frac{x}{|x|^{s+1}}\bigg]\psi(y)\,d\mu(y)\bigg|\\
&\le\frac{C}{|x|^{s+1}}\int_{\R^d}|y\psi(y)|\,d\mu(y)=\frac{C_\psi}{|x|^{s+1}}.
\end{split}
\end{equation*}
Choose a Lipschitz continuous compactly supported function $\xi$ which is identically 1 on $B(0,2R)$. Then we may define the form $\langle R^s(\psi\mu),\f\rangle_\mu$ as
$$
\langle R^s(\psi\mu),\f\rangle_\mu =\langle R^s(\psi\mu),\xi\f\rangle_\mu + \int_{\R^d}\bigg[\int_{\R^d}\frac{y-x}{|y-x|^{s+1}}\psi(y)\,d\mu(y)\bigg](1-\xi(x))\f(x)\,d\mu(x).
$$
The repeated integral is well defined because
$$
\int_{|x|>2R}\frac{d\mu(x)}{|x|^{s+1}}\le C\int_{2R}^\infty\frac{\mu(B(0,t))}{t^{s+2}}\,dt\le C\int_{2R}^\infty\frac1{t^2}\,dt.
$$

Assuming that Theorem \ref{th23} is incorrect and using the Cotlar inequality we establish the existence of a positive measure $\nu$ such that $\nu$ has no point masses, the operator $R_\nu^s$ is bounded on $L^2(\nu)$, and $\langle R^s(\psi\nu),1\rangle_\nu=0$ for every Lipschitz continuous function $\psi$ with $\int\psi\,d\mu=0$. It means that $\nu$ is a reflectionless measure, that is a measure without point masses with the following properties: $R_\nu^s$ is bounded on $L^2(\nu)$, and $\langle R^s(\psi\nu),1\rangle_\nu=0$ for every Lipschitz continuous compactly supported function $\psi$ such that $\int\psi\,d\mu=0$. But according to the recent result by Prat and Tolsa \cite{PT} such measures do not exist for $0<s<1$. We remark that the proof of this result contains estimates of an analog of the Melnikov's curvature of a measure. This is the obstacle to extent the result to $s\ge1$. We now turn to the details.

Suppose that $C$ satisfying \eqref{f2} does not exists. Then for every $n\ge1$ there is a positive measure $\mu_n$ such that
$$
\sup_{x\in\R^d}|R^s\mu_n(x)|=1,\quad \sup_{x\in\supp\mu_n}|R^s\mu_n(x)|\le\frac1n.
$$
Let
$$
\theta_\mu(x,r):=\frac{\mu(B(x,r))}{r^s},\quad \t_\mu:=\sup_{x,r}\theta_\mu(x,r).
$$
We prove that
\begin{equation}\label{f23}
0<c<\t_{\mu_n}<C.
\end{equation}
The estimate from above is a direct consequence of Lemma \ref{le22}. Indeed, for any ball $B(x,r)$ \eqref{f22} implies the estimate
$$
c_dr^{d-1}\ge\bigg|\int_{\partial B}(R^s\mu_n\cdot\bold n)\,d\s\bigg|\ge Cr^{d-s-1}\mu_n(B),
$$
which implies the desired inequality.

The estimate from below follows immediately from a Cotlar-type inequality
$$
\sup_{x\in\R^d}|R^s\mu_n(x)|\le C\big[\sup_{x\in\supp\mu_n}|R^s\mu_n(x)|+\t_{\mu_n}\big]
$$
(see \cite[Theorem 7.1]{NTV98} for a more general result).

Let $B(x_n,r_n)$ be a ball such that $\t_{\mu_n}(x_n,r_n)>c=c(s,d)$, and let $\nu_n(\cdot)=r_n^{-s}\mu_n(x_n+r_n\cdot)$. Then
$$
R^s\mu_n(x)=R^s\nu_n\bigg(\frac{x-x_n}{r_n}\bigg), \quad \t_{\nu_n}(y,t)=\t_{\mu_n}(r_ny+x_n,r_nt).
$$
In particular, $\nu_n(B(0,1))=\t_{\mu_n}(x_n,r_n)>c$. Choosing a weakly converging subsequence of $\{\nu_n\}$, we obtain a positive measure $\nu$. If we prove that

(a) $\nu(B(y,t))\le Ct^s$,

(b) $\langle R^s\nu,\psi\rangle_\nu=0$ for every Lipschitz continuous compactly supported function $\psi$ with $\int\psi\,d\nu=0$,

(c) the operator $R_\nu^s$ is bounded on $L^2(\nu)$,

\noindent then $\nu$ is reflectionless, and we come to contradiction with Theorem~1.1 in \cite{PT} mentioned above. Thus, the proof would be completed.

The property (a) follows directly from \eqref{f23}. For weakly converging measures $\nu_n$ with $\t_{\mu_n}<C$ we may apply Lemma~8.4 in \cite{JN} which yields (b).
To establish (c) we use the inequality
$$
R^{s,\ast}\mu(x):=\sup_{\e>0}|R_{\mu,\e}^s1(x)|\le\|R^s\mu\|_{L^\infty(m_d)}+C,\quad x\in\R^d,\ C=C(s),
$$
for any positive Borel measure $\mu$ such that $\mu(B(x,r))\le r^s,\ x\in\R^d,\ r>0$, -- see \cite[Lemma~2]{Vih} or \cite[p.~47]{Vo}, \cite[Lemma~5.1]{AE} for a more general setting. Thus, $R_{\e}^s\nu_n(x):=R_{\nu_n,\e}^s1(x)\le C$ for every $\e>0$. Hence, $R_{\e}^s\nu(x)\le C$ for $\e>0$, $x\in\R^d$, and the non-homogeneous $T1$-theorem \cite{NTV03} implies the boundedness of $R_\nu^s$ on $L^2(\nu)$.
\end{proof}

\section{The case of radial density}\label{sect3}

Lemma \ref{le22} allows us to prove the maximum principle for all $s\in(0,d)$ in the special case of radial density.

\begin{proposition}\label{pr31} Let $d\mu(x)=h(|x|)\,dm_d(x)$, where $h(t)$ is a continuous function on $[0,\infty)$, and let $s\in(0,d-1)$. Then \eqref{f2} holds with a constant $C$ depending only on $d$ and $s$.
\end{proposition}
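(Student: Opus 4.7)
The plan is to reduce the problem to a one-dimensional estimate via rotational symmetry, translate the magnitude of $R^s\mu$ into a simple expression in $M(t):=\mu(B(0,t))$ using Lemma~\ref{le22}, and finish with a short convexity argument.

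First, the radial invariance of $\mu$ gives $R^s\mu(Tx)=TR^s\mu(x)$ for every rotation $T$ about the origin, so $R^s\mu$ is a radial vector field:
\begin{equation*}
R^s\mu(x)=F(|x|)\frac{x}{|x|},\qquad |R^s\mu(x)|=|F(|x|)|,
\end{equation*}
for some scalar $F:(0,\infty)\to\R$. Writing $E_0:=\overline{\{r\ge 0:h(r)\ne 0\}}$, one has $\supp\mu=\{x\in\R^d:|x|\in E_0\}$, so \eqref{f2} reduces to $\sup_{r>0}|F(r)|\le C\sup_{r\in E_0}|F(r)|$. Now apply Lemma~\ref{le22} to $B(0,r)$: since the outer normal $\bold n=x/r$ is exactly the direction in which $R^s\mu$ points, $R^s\mu\cdot\bold n\equiv F(r)$ on $\partial B(0,r)$, and the left side of \eqref{f22} equals $c_dr^{d-1}|F(r)|$. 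Dividing by $r^{d-1}$ gives
\begin{equation*}
|F(r)|\;\approx\;g(r):=r^{-s}M(r)+r\int_r^\infty\frac{dM(t)}{t^{s+1}}.
\end{equation*}

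The key step is a convexity analysis of $g$ on the components of $[0,\infty)\setminus E_0$. That set is a disjoint union of open intervals $(a,b)$ on each of which $h\equiv 0$, so $M$ is constant on $[a,b]$ and $dM$ vanishes there. For $r\in(a,b)$,
\begin{equation*}
g(r)=r^{-s}M(a)+rB,\qquad B:=\int_{[b,\infty)}\frac{dM(t)}{t^{s+1}},
\end{equation*}
and $g''(r)=s(s+1)r^{-s-2}M(a)\ge 0$. Hence $g$ is convex on $[a,b]$, so its maximum there is attained at an endpoint, and both finite endpoints lie in $E_0$ by construction. The degenerate cases are immediate: if $a=0$, continuity of $h$ gives $M(0)=0$, so $g(r)=rB$ is linear and the max is at $b\in E_0$; if $b=\infty$, compactness of $\supp\mu$ gives $B=0$, so $g(r)=r^{-s}M(a)$ is decreasing and the max is at $a\in E_0$. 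Combined with $|F(r)|\approx g(r)$, this yields \eqref{f2}. The main ingredient is Lemma~\ref{le22}, already in hand; the only fresh input is the convexity observation, which is essentially automatic once one notes that $M$ is locally constant on the complement of $E_0$, so no real obstacle is anticipated.
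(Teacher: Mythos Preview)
Your proof is correct and follows the paper's approach: reduce via radial symmetry, apply Lemma~\ref{le22} to obtain $|R^s\mu(x)|\approx g(r):=r^{-s}M(r)+r\int_r^\infty t^{-s-1}\,dM(t)$, and then exploit that $M$ is constant on each gap of the radial support. The only difference is in the final elementary step: the paper fixes $w\notin\supp\mu$, splits into two cases according to which summand of $g$ dominates, and compares to the nearest point of $\supp\mu$ on the appropriate side (using that $r^{-s}M(a)$ decreases and $rB$ increases); you instead observe that $g(r)=r^{-s}M(a)+rB$ is convex on each gap $(a,b)$, so its maximum is attained at an endpoint in $E_0$. Your convexity remark neatly packages the paper's two cases into one line, but the substance is the same.
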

We remind that for $s\in[d-1,d)$ Conjecture \ref{conj11} is proved in \cite{ENV} for any compactly supported measure with $C^\infty$ density. Thus, for compactly supported radial measures with $C^\infty$ density \eqref{f2} holds for all $s\in(0,d)$.

\begin{proof} Because $\mu$ is radial, by \eqref{f22} we have
\begin{align*}
c_dr^{d-1}|R^s\mu(x)|&=\bigg|\int_{\partial B(0,r)}(R^s\mu\cdot\bold n)\,d\s\bigg|\\
&\approx r^{d-s-1}\mu(B(0,r))+r^d\int_r^\infty\frac{d\mu(B(0,t))}{t^{s+1}},\quad r=|x|.
\end{align*}
Thus,
$$
|R^s\mu(x)|\approx\frac{\mu(B(0,r))}{r^{s}}+r\int_r^\infty\frac{d\mu(B(0,t))}{t^{s+1}}.
$$
Fix $w\not\in\supp\mu$, and let $r=|w|$. If
$$
\frac{\mu(B(0,r))}{r^{s}}\ge r\int_r^\infty\frac{d\mu(B(0,t))}{t^{s+1}},
$$
then there is $r_1\in(0,r)$ such that $\{y:|y|=r_1\}\subset\supp\mu$ and $\mu(B(0,r))=\mu(B(0,r_1))$. Hence,
$$
|R^s\mu(w)|\approx\frac{\mu(B(0,r))}{r^s}<\frac{\mu(B(0,r_1))}{r_1^s}\le C|R^s\mu(x_1)|,\quad |x_1|=r_1.
$$
If
$$
\frac{\mu(B(0,r))}{r^{s}}< r\int_r^\infty\frac{d\mu(B(0,t))}{t^{s+1}},
$$
then there is $r_2>r$ such that $\{y:|y|=r_2\}\subset\supp\mu$ and $\mu(B(0,r))=\mu(B(0,r_2))$. Hence,
$$
\frac{\mu(B(0,r_2))}{r_2^s}<\frac{\mu(B(0,r))}{r^s}< r\int_{r_2}^\infty\frac{d\mu(B(0,t))}{t^{s+1}},
$$
and we have
$$
|R^s\mu(w)|\approx r\int_{r_2}^\infty\frac{d\mu(B(0,t))}{t^{s+1}}\le C|R^s\mu(x_2)|,\quad |x_2|=r_2.
$$
\end{proof}

\section{Counterexample}\label{sect4}

Given $\e>0$, we construct a signed measure $\nu=\nu(\e)$ in $\R^5$ with the following properties:

(a) $\nu$ is a radial signed measure with $C^\infty$-density;

(b) $\supp\nu\in D_\e:=\{1-\e\le|x|\le1+\e\}$;

(c) $|R^2\nu(x)|<\e$ for $x\in\supp\nu$; $|R^2\nu(x)|>a>0$ for $|x|=2$, where $a$ is an absolute constant. Here $R^2\nu$ means $R^s\nu$ with $s=2$.

Let $\D^2:=\D\circ\D$, and let
$$
u(x)=\begin{cases}2/3,&|x|\le1,\\
\dfrac1{|x|}-\dfrac1{3|x|^3},& |x|>1.\end{cases}
$$
Note that $\D(\frac1{|x|^3})=0$ and $\D^2(\frac1{|x|})=0$ in $\R^5\setminus\{0\}$. Hence, $\D^2u(x)=0, |x|\ne1$. Moreover, $\nabla u$ is continuous in $\R^5$ and $\nabla u(x)=0$, $|x|=1$.

For $\d\in(0,\e)$, let $\f_\d(x)$ be a $C^\infty$-function in $\R^5$ such that $\f_\d>0$, $\supp\f_\d=\{x\in\R^5:|x|\le\d\}$, and $\int\f_\d(x)\,dm_5(x)=1$ (for example, a bell-like function on $|x|\le\d\}$). Let $U_\d:=u\ast\f_\d$. Then $\D^2U_\d(x)=0$ as $x\not\in D_\d$. Also, $\D U_\d(x)\to0$ as $|x|\to\infty$. Hence, the function $\D U_\d$ can be represented in the form
$\D U_\d=c(\frac1{|x|^3}\ast\D^2U_\d)$ (here and in the sequel by $c$ we denote various absolute constants). Set $d\nu_\d=\D^2U_\d\,dm_5$. Then $\supp\nu_\d\in D_\d$, and (b) is satisfied. Since $\D(\frac1{|x|})=\frac c{|x|^3}$, we have $\D U_\d=c\D(\frac1{|x|}\ast\D^2U_\d)$, that is $U_\d=c(\frac1{|x|}\ast\D^2U_\d)+h$, where $h$ is a harmonic function in $\R^5$. Since both $U_\d$ and $\frac1{|x|}\ast\D^2U_\d$ tend to 0 as $x\to\infty$, we have
$$
U_\d=c\bigg(\frac1{|x|}\ast\D^2U_\d\bigg).
$$
Thus,
$$
R^2\nu(x)=c\int\frac{y-x}{|y-x|^3}\,d\nu_\d(y)=c\nabla U_\d(x).
$$
Obviously, $\nabla U_\d=\nabla(u\ast\f_\d)=(\nabla u)\ast\f_\d$, and hence $\max_{x\in D_\d}|\nabla U_\d(x)|\to0$ as $\d\to0$. On the other hand, for fixed $x$ with $|x|>1$ (say, $|x|=2$) we have $\lim_{\d\to0}|\nabla u\ast\f_\d|=|\nabla u(x)|>0$. Thus, (c) is satisfied if $\d$ is chosen sufficiently small.

\vspace{.2cm}
{\bf Remark.}
It is well-known that the maximum principle (with a constant $C$) holds for potentials $\int K(|x-y|)\,d\mu(y)$ with non-negative kernels $K(t)$ decreasing on $(0,\infty)$, and non-negative finite Borel measures $\mu$. Our arguments show that for non-positive measures the analog of \eqref{f2} fails even for potentials with positive Riesz kernels. In fact we have proved that {\it for every $\e>0$, there exists a signed measure $\eta=\eta(\e)$ in $\R^5$ with $C^\infty$-density and such that $\supp\eta\in D_\e:=\{1-\e\le|x|\le1+\e\}$, $|u_\eta(x)|<\e$ for $x\in\supp\eta$, but $|u_\eta((2,0,\dots,0))|>b>0$}, where $u_\eta(x):=\int\frac{d\eta(y)}{|y-x|}$, and $b$ is an absolute constant.

Indeed, for the first component $R_1^2\nu$ of $R^2\nu$ we have
$$
R_1^2\nu=c\frac{\partial}{\partial x_1}U_\d=c\bigg(\frac1{|x|}\ast\frac{\partial}{\partial x_1}(\D^2U_\d)\bigg)=cu_\eta,\text{ where } d\eta=\frac{\partial}{\partial x_1}(\D^2U_\d)\,dm_5.
$$

\end{document}